\theoremstyle{plain}
\newtheorem{theorem}{Theorem}
\newtheorem{corollary}{Corollary}
\newtheorem{proposition}{Proposition}
\newtheorem{lemma}{Lemma}
\theoremstyle{definition}
\newtheorem{definition}{Definition}
\theoremstyle{remark}
\newtheorem{remark}{Remark}
\newtheorem{example}{Example}
\def\bbN{\mathbb N}
\def\bbR{\mathbb R}
\def\bbT{\mathbb T}
\def\bbZ{\mathbb Z}
     \newcommand{\sB}{\mathcal B}
     \newcommand{\sD}{\mathcal D}
     \newcommand{\sG}{\mathcal G}
     \newcommand{\sL}{\mathcal L}
     \newcommand{\sN}{\mathcal N}
     \newcommand{\sO}{\mathcal O}
     \newcommand{\sP}{\mathcal P}
     \newcommand{\sU}{\mathcal U}
     \newcommand{\sV}{\mathcal V}
     \newcommand{\sW}{\mathcal W}
     \newcommand{\sY}{\mathcal Y}
\newcommand{\al}{\alpha}
\newcommand{\ep}{\epsilon}
\newcommand{\vpi}{\varphi}
\newcommand{\de}{\delta}
\newcommand{\om}{\omega}
\begin{document}

\title[Semigroups \& transfer operators]{Semigroups of locally injective maps and transfer operators}

\author{Justin R. Peters}
\address{Department of Mathematics\\
    Iowa State University, Ames, Iowa, USA}

\email{peters@iastate.edu}
\thanks{The author acknowledges partial support from the National Science Foundation, DMS-0750986}

\subjclass[2000]{37B05 primary; 46L55, 54H24, 20M20 secondary}

 \begin{abstract} We consider semigroups of continuous, surjective, locally
 injective maps of a compact metric space, and whether such
 semigroups admit a transfer operator.
 \end{abstract}
\maketitle

\section{Introduction}
In this note we consider certain semigroups of continuous,
surjective, locally injective maps acting on a compact metric space.
In \cite{ER} R. Exel and J. Renault looked at crossed products
arising from semigroups of local homeomorphisms acting on a compact
metric space. In particular, the semigroups studied were assumed to
satisfy an admissibility condition. Admissibility is equivalent to
the existence of a transfer operator. The object here is to examine
the question of admissibility of a slightly broader class of maps.

If $\sP$ is a semigroup and $\vpi: \sP \to CSLI(X),\ n \to \vpi_n,$
is an isomorphism of $\sP$ into the continuous, surjective, locally
injective maps of a compact metric space $X$ to itself, then $\sP$
can be viewed as a semigroup of unital endomorphisms $\al_n$ of
$C(X)$, via $\al_n(f) = f\circ\vpi_n.$ A \emph{transfer operator} is
a linear map $\sL_n: C(X) \to C(X)$ which is a left inverse of
$\al_n,\ n \in \sP.$ Transfer operators have been studied in the
contexts of both ergodic and topological dynamical systems
(cf~\cite{Ba}). Here the maps we consider are locally injective, so
that a transfer operator, if it exists, is of the form
\[ \sL_n(f)(y) = \sum_{\vpi_n(x) = y} \om(n, x) f(x) \]
where $\om(n, \cdot)$ is a cocycle for the semigroup $\sP.$ Local
injectivity is neither necessary nor sufficient for the existence of
a transfer operator.

Unlike the situation for a local homeomorphism, where a cocycle,
hence a transfer operator, can always be defined, for continuous,
surjective, locally injective (CSLI) maps, it  may happen that no
cocycle exists, even if one relaxes the condition of strict
positivity.  Indeed, as we show, the existence of a strictly
positive cocycle for a CSLI map $\vpi$ implies that $\vpi$ is a
local homeomorphism.  In our context, an admissible dynamical system
is one which admits a nonnegative, continuous cocycle. (See
Definition~\ref{d:cocycle}.) As we show, this weaker form of
admissibility is not always satisfied, even in the case of a single
CSLI map, that is where the semigroup is isomorphic with $\bbN.$
(See Example~\ref{e:notadmiss}.) In the case of semigroups with a
finite set of free generators (i.e., isomorphic to $(\bbZ^+)^k$), we
give necessary and sufficient conditions on the generators for
admissibility.

We also consider another class of semigroups, divisible semigroups,
which include examples such as the (additive) semigroup of positive
dyadic rationals.  There we give necessary (but not sufficient)
conditions for a semigroup to be admissible, and an example where
the cocycle is constructed.

I would like to thank the referee for his suggestions.

\section{Preliminaries}
All actions will take place on a compact metric space $X$. The class
of mappings studied here are continuous, surjective, and locally
injective, which we abbreviate as CSLI. Note that if a CSLI map
$\vpi$ is also open, then it is a local homeomorphism. The class of
CSLI maps on a metric space $X$ is hereditary in the sense that if
$Y \subset X$ is a closed subset such that $\vpi(Y) = Y,$ then
$\vpi$ is also a CSLI map of $Y.$  By contrast, local homeomorphisms
are not hereditary.

We will use $\sP$ to denote a semigroup. For the semigroups $\sP$
considered there is a group $\sG$ such that $\sP \subset \sG$, and
the group operation on $\sG$ maps $\sP \times \sP \to \sP.$  The
semigroups considered here will be abelian, and the group operation
will be written additively.

Thus, we are given a compact metric space $X$, an abelian semigroup
$\sP,$ and a map $\vpi$ of $\sP$ into the CSLI maps of $X,$
satisfying
\[ \vpi_{n+m} = \vpi_n \circ \vpi_m \ \text{ for all } n,\ m \in
\sP.\] We will refer to the pair $(\sP, X)$ as a \emph{dynamical
system}. In case the semigroup $\sP$ is the natural numbers $\bbN,$
there is a generator $\vpi_1;$ we will write $\vpi$ in place of
$\vpi_1.$

If the semigroup $\sP$ is contained in a torsion group $\sG$, so
that for each $n \in \sP$ there is $k \in \bbN$ such that $kn = 0,$
then $\vpi_n$ is a homeomorphism, since the $k-$fold composition
with itself is a homeomorphism.  Such semigroups are not of interest
here, where we want to study CSLI maps which are not homeomorphisms.
The semigroups $\sP$ will not necessarily be assumed to contain the
zero element, in which case the corresponding subset $\{ \vpi_n:\ n
\in \sP\}$ will not contain the identity.

\begin{definition} \label{d:cocycle} We will say $\om$ is a \emph{cocycle} on a dynamical system $(\sP,
X)$ if
\begin{enumerate}
\item $\om$ is a function from $\sP \times X \to \bbR,$ and $\om(n,
x) \geq 0 $ for all $(n, x) \in \sP \times X;$
\item for each $y \in X,\ n \in \sP,\ \sum_{\vpi_n(x) = y} \om(n, x)
= 1;$
\item for each $n \in \sP,$ the map $ x \in X \to \om(n, x)$ is
continuous;
\item $\om$ satisfies the cocycle identity:
\[ \om(m+n, x) = \om(m, x) \om(n, \vpi_m(x)) .\]
\end{enumerate}
A dynamical system $(\sP, X)$ will be called \emph{admissible} if it
admits a cocycle.
\end{definition}
Our definition of admissibility differs from that in \cite{ER} as we
do not include the requirements of strict positivity or coherence in
the definition of admissibility.  Indeed, for a singly generated
semigroup, say with generator a CSLI map $\vpi,$ we show there
exists a strictly positive cocycle for the action if and only if
$\vpi$ is a local homeomorphism (Corollary~\ref{c:strictlypos}).

In the case of singly generated semigroups $\sP$ (isomorphic with
$\bbN$), admissibility depends on the existence of a cocycle for the
generator, denoted by $\vpi.$ Indeed, if a cocycle exists for $\vpi$
the cocycle for $\vpi^n$ (the $n$-fold composition of $\vpi$) is
then determined by the cocycle identity (4).  Thus, to simplify
notation, for singly generated semigroups we write the cocycle as
$\om(\cdot)$ and omit the dependence on the semigroup.

If $F$ is a finite or infinite set, we use $|F|$ to denote the
cardinality of $F.$ We will denote the metric on the space $X$ by
$\rho$.

\section{Semigroups of CSLI maps}
\subsection{Generalities concerning CSLI maps}
We begin with some elementary topological results for CSLI maps.

\begin{lemma} \label{l:bddpoint} Let $(\vpi, X)$ be a CSLI dynamical system.  Then for
all $x \in  X, \ |\vpi^{-1}(x)| < \infty.$
\end{lemma}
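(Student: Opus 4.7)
The plan is to combine local injectivity with the compactness of $X$ and continuity of $\vpi$ to conclude that $\vpi^{-1}(x)$ is a discrete, closed subset of a compact space, hence finite.

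First I would fix $x \in X$ and consider the preimage $F := \vpi^{-1}(x)$. By continuity of $\vpi$, the singleton $\{x\}$ is closed and therefore $F$ is a closed subset of the compact space $X$, so $F$ is itself compact.

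Next I would use the hypothesis of local injectivity. For each $y \in F$, there is an open neighborhood $U_y$ of $y$ on which $\vpi$ is injective. If some other point $y' \in F$ were to lie in $U_y$, then $\vpi(y) = x = \vpi(y')$ would contradict the injectivity of $\vpi|_{U_y}$. Hence $U_y \cap F = \{y\}$, which shows that $F$ is discrete as a subspace of $X$.

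Finally, a compact discrete space is finite: the sets $\{U_y \cap F : y \in F\}$ form an open cover of $F$ (in the subspace topology) by singletons, and compactness forces this cover to have a finite subcover, so $|F| < \infty$. No serious obstacle is expected; the only subtlety is remembering that local injectivity gives an actual open neighborhood (not merely injectivity on a closed set), which is what makes the discreteness argument go through.
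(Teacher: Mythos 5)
Your proof is correct and is in substance the same as the paper's: both arguments use local injectivity to show that $\vpi^{-1}(x)$ has no accumulation point and compactness to conclude finiteness. The only difference is presentational — you argue directly (closed discrete subset of a compact space is finite via a finite subcover), while the paper argues by contradiction, extracting a convergent sequence of distinct preimages and contradicting injectivity near the limit.
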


\begin{proof}  If  for some $x_0,\ |\vpi^{-1}(x_0)|$ is infinite,
there is a sequence $\{u_n\} \subset \vpi^{-1}(x_0),$ with the $u_n$
distinct points, which converges to a point $u_0$. Thus, $\vpi(u_n)
\to \vpi(u_0).$ As $\vpi(u_n) = x_0,$ it follows $\vpi(u_0) = x_0.$
But then $\vpi$ is not injective on any neighborhood of $u_0.$
\end{proof}

\begin{lemma} \label{l:card} Let $(\vpi, X)$ be CSLI, $y_0 \in  X, \ \vpi^{-1}(y_0)
= \{x_1, \dots, x_N\}.$ Given $\ep > 0$ let the compact
neighborhoods $\sN_j = \{ x: \rho(x, x_j) \leq \ep \}$. Then there
is a $\de
> 0$ so that if $\sU = \{ y: \rho(y, y_0) \leq \de\}$ then
$\vpi^{-1}(\sU) \subset \cup_{j=1}^N \sN_j.$
\end{lemma}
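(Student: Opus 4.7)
The plan is to prove this by a standard compactness/contradiction argument, using continuity of $\vpi$ together with the finiteness of $\vpi^{-1}(y_0)$ guaranteed by Lemma~\ref{l:bddpoint}.

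First I would assume the conclusion fails. That gives sequences $\de_n \downarrow 0$, points $y_n \in X$ with $\rho(y_n, y_0) \leq \de_n$, and preimages $u_n \in \vpi^{-1}(y_n)$ with $u_n \notin \bigcup_{j=1}^{N} \sN_j$. Since $X$ is compact, I can pass to a convergent subsequence $u_{n_k} \to u_0 \in X$. Continuity of $\vpi$ then gives $\vpi(u_0) = \lim \vpi(u_{n_k}) = \lim y_{n_k} = y_0$, so $u_0 \in \vpi^{-1}(y_0) = \{x_1, \dots, x_N\}$, hence $u_0 = x_{j_0}$ for some $j_0$.

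Next I would derive the contradiction from the exclusion condition. Because $u_{n_k} \notin \sN_{j_0}$, we have $\rho(u_{n_k}, x_{j_0}) > \ep$ for every $k$, and passing to the limit yields $\rho(u_0, x_{j_0}) \geq \ep > 0$, contradicting $u_0 = x_{j_0}$.

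I do not expect any real obstacle here; the only mild subtlety is that the $\sN_j$ are specified as closed balls, so the negation of membership is a strict inequality, which is exactly what survives the limit to force $u_0 \neq x_{j_0}$. Neither local injectivity nor surjectivity of $\vpi$ is actually needed for this lemma, only continuity of $\vpi$, compactness of $X$, and finiteness of the fiber $\vpi^{-1}(y_0)$.
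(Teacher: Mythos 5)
Your argument is correct and is essentially identical to the paper's own proof: both assume no such $\de$ exists, extract a sequence of preimages of points converging to $y_0$ that avoid $\cup_j \sN_j$, pass to a convergent subsequence by compactness, and derive a contradiction since the limit lies in $\vpi^{-1}(y_0)$ yet is at distance at least $\ep$ from every $x_j$. Your closing observation that only continuity, compactness, and finiteness of the fiber are needed is also accurate.
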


\begin{proof}  Assume that no such $\de $ exists, and let $\sU_n$ be a nested neighborhood base at $y_0$ and
$x_n \in \vpi^{-1}(\sU_n),\ x_n \notin \cup_{j=1}^N \sN_j.$ Taking a
subsequence, we may assume $x_n \to x_0',$ for some point $x_0' \in
X.$ Since $\vpi(x_n) \in \sU_n,$ \ $\vpi(x_n) \to y_0.$  Hence $
\vpi(x_0') = y_0.$ But that is impossible, as $\rho(x_0', x_j) \geq
\ep$ for $j = 1, \dots, N.$
\end{proof}

\begin{remark} \label{r:disjoint} Note that $\ep > 0$ can be taken
sufficiently small so that the $\sN_j$ are pairwise disjoint, and so
that $\vpi$ is injective on $\sN_j.$
\end{remark}

\begin{corollary} \label{c:cardopen} The set $\{ u \in X: \ |\vpi^{-1}(u)| \leq N \}$ is open.
\end{corollary}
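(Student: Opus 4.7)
The plan is to apply Lemma~\ref{l:card} (together with Remark~\ref{r:disjoint}) to show that any point $y_0$ with $|\vpi^{-1}(y_0)| \leq N$ has a whole open neighborhood sitting inside the set $\{u : |\vpi^{-1}(u)| \leq N\}$.

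First I would fix such a $y_0$ and, using Lemma~\ref{l:bddpoint}, write $\vpi^{-1}(y_0) = \{x_1, \dots, x_M\}$ with $M \leq N$. By local injectivity and Remark~\ref{r:disjoint}, I can choose $\ep > 0$ small enough that the closed balls $\sN_j = \{x : \rho(x, x_j) \leq \ep\}$ are pairwise disjoint and $\vpi|_{\sN_j}$ is injective for each $j = 1, \dots, M$.

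Next, Lemma~\ref{l:card} provides $\de > 0$ so that every $y$ in $\sU = \{y : \rho(y, y_0) \leq \de\}$ satisfies $\vpi^{-1}(y) \subset \bigcup_{j=1}^M \sN_j$. Because $\vpi$ is injective on each $\sN_j$, the set $\vpi^{-1}(y) \cap \sN_j$ contains at most one point, so $|\vpi^{-1}(y)| \leq M \leq N$ for all $y \in \sU$. The open ball of radius $\de$ about $y_0$ is therefore an open neighborhood of $y_0$ contained in $\{u \in X : |\vpi^{-1}(u)| \leq N\}$, which proves the set is open.

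There is no real obstacle here; the content is already packaged in Lemma~\ref{l:card} and Remark~\ref{r:disjoint}, and the corollary amounts to the observation that injectivity of $\vpi$ on each $\sN_j$ forces at most one preimage per $\sN_j$, hence at most $M$ preimages in total for nearby points.
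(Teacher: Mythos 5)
Your proof is correct and follows essentially the same route as the paper: apply Lemma~\ref{l:card} with $\ep$ chosen (per Remark~\ref{r:disjoint}) so that $\vpi$ is injective on each $\sN_j$, and conclude that nearby points have at most one preimage per $\sN_j$. Your version is slightly more careful in distinguishing the actual cardinality $M = |\vpi^{-1}(y_0)|$ from the bound $N$, which is a minor but welcome clarification.
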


\begin{proof} The neighborhoods $\sU,\ \sN_j$ of Lemma~\ref{l:card} can be
made small enough so that $\vpi$ is injective on each $\sN_j.$ Thus,
for any $u \in \sU,\ |\vpi^{-1}(u)| \leq N.$
\end{proof}

\begin{corollary} \label{c:bddinv} There exists $N \in \bbN$ such
that $\sup_{y \in X} |\vpi^{-1}(y)| = N.$
\end{corollary}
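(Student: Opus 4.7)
The plan is to combine the finiteness at each point (Lemma~\ref{l:bddpoint}) with the openness statement (Corollary~\ref{c:cardopen}) and then invoke compactness of $X$.

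First I would set, for each $k \in \bbN$,
\[ U_k = \{ u \in X : |\vpi^{-1}(u)| \leq k \}. \]
By Corollary~\ref{c:cardopen}, each $U_k$ is open, and the family $\{U_k\}_{k\in\bbN}$ is nested increasing. By Lemma~\ref{l:bddpoint}, every $x \in X$ satisfies $|\vpi^{-1}(x)| < \infty$, so $x \in U_k$ for some $k$. Therefore $\{U_k\}$ is an open cover of $X$.

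Since $X$ is compact and the cover is nested, some single $U_{N_0}$ already equals $X$. This means $|\vpi^{-1}(y)| \leq N_0$ for every $y \in X$, so the supremum $\sup_{y \in X}|\vpi^{-1}(y)|$ is finite. Because this supremum is the supremum of a nonempty bounded subset of $\bbN$, it is attained and equals some $N \in \bbN$ (namely the largest integer $k \leq N_0$ for which there exists $y$ with $|\vpi^{-1}(y)| = k$). This gives the desired conclusion.

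There is no real obstacle: the content is packaged in the preceding lemma and corollary, and compactness plus nestedness of the cover immediately produces a uniform bound. The only subtlety worth flagging is that $\sup$ must actually be a maximum, which follows from the fact that the cardinalities take values in $\bbN$.
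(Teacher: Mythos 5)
Your proof is correct and follows essentially the same route as the paper: both arguments rest on Corollary~\ref{c:cardopen} plus compactness of $X$, the only cosmetic difference being that you cover $X$ by the nested sublevel sets $U_k$ while the paper covers it by neighborhoods $\sU_y$ of individual points. Your remark that the supremum is actually attained (being the sup of a nonempty bounded set of natural numbers) is a worthwhile touch that the paper leaves implicit.
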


\begin{proof} By Corollary~\ref{c:cardopen} for each $y \in X$ there
is a neighborhood $\sU_y$ of $y$ and a minimal integer $N_y$ so that
for $u \in \sU_y,\ |\vpi^{-1}(u)| \leq N_y.$ Now by compactness of
$X$ there is a finite subcover $\sU_{y_i}$, and $N$ can be taken as
the the maximum of the corresponding $N_{y_i}.$
\end{proof}

The next lemma is known, but we include it here for completeness.

\begin{lemma} \label{l:open} Let $\vpi$ be a local homeomorphism of a compact
metric space $X$.  Let $y_0 \in X$ and suppose $|\vpi^{-1}(y_0)| =
N.$  Then there is an open neighborhood $\sU$ of $y_0$ for which
$|\vpi^{-1}(y)| \geq N, \ y \in \sU.$
\end{lemma}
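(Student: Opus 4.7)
The plan is to use lower semicontinuity of the preimage count for local homeomorphisms, which follows almost directly from the definition once one isolates the finitely many preimages of $y_0$ in disjoint local charts.

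First I would enumerate the preimage set as $\vpi^{-1}(y_0) = \{x_1,\dots,x_N\}$. Since $\vpi$ is a local homeomorphism, for each $j$ there is an open neighborhood $W_j$ of $x_j$ such that $\vpi$ restricts to a homeomorphism of $W_j$ onto the open set $\vpi(W_j)$. Because $X$ is a (compact) metric space, hence Hausdorff, and the $N$ points $x_j$ are distinct, I can shrink each $W_j$ to an open neighborhood $V_j \subset W_j$ of $x_j$ so that the $V_j$ are pairwise disjoint while $\vpi|_{V_j}$ remains a homeomorphism onto the open set $\vpi(V_j)$.

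Next I set
\[ \sU := \bigcap_{j=1}^{N} \vpi(V_j), \]
which is a finite intersection of open sets containing $y_0$, hence an open neighborhood of $y_0$. For any $y \in \sU$ and each $j \in \{1,\dots,N\}$, the fact that $\vpi|_{V_j}: V_j \to \vpi(V_j)$ is a bijection produces a unique point $x_j' \in V_j$ with $\vpi(x_j') = y$. Since the $V_j$ are pairwise disjoint, the points $x_1',\dots,x_N'$ are distinct, giving $N$ preimages of $y$ and therefore $|\vpi^{-1}(y)| \geq N$.

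No serious obstacle is anticipated; the only point requiring minor care is the simultaneous shrinking of the $W_j$ to disjoint neighborhoods while preserving the local-homeomorphism property, which is immediate from Hausdorffness and the fact that restrictions of homeomorphisms to open subsets are again homeomorphisms onto their images.
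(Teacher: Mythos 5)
Your proof is correct and follows essentially the same route as the paper: choose pairwise disjoint neighborhoods of the $N$ preimages on which $\vpi$ is a homeomorphism onto an open image, intersect those images to get the neighborhood $\sU$ of $y_0$, and pull back each $y \in \sU$ through each chart to produce $N$ distinct preimages. The only (minor) difference is that you explicitly record that the images $\vpi(V_j)$ are open, a point the paper leaves implicit.
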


\begin{proof} Let $\vpi^{-1}(y_0) = \{ x^0_{(j)}:\ j = 1, \dots, N\}.$
Let $\sN_j$ be a neighborhood of $x^0_{(j)}$ such that $\vpi|\sN_j$
is a homeomorphism, and $\sN_i \cap \sN_j = \emptyset,$ for $ i \neq
j.$ Let $\sU_j = \vpi(\sN_j),\ j = 1, \dots, N,$ and set
\[ \sU = \cap_{j=1}^N \sU_j, \text{ and } \sW_j = \vpi^{-1}(\sU)\cap \sN_j.\]
Set $\vpi^{(j)} = \vpi|\sW_j.$

Let $y \in \sU$ and $x_{(j)} = {\vpi^{(j)}}^{-1}(y),\ j = 1, \dots,
N.$ Then $\{ x_{(j)}:\ j = 1, \dots, N\}$ is a set of cardinality
$N$, so that $|\vpi^{-1}(y)| \geq N.$
\end{proof}

\begin{corollary} \label{c:clopen} Let $(\vpi, X)$ be a CSLI system,
and let $X_j = \{y \in X: \ |\vpi^{-1}(y)| = j\}.$ Then $\vpi$ is a
local homeomorphism if and only if each $X_j$ is both closed and
open.
\end{corollary}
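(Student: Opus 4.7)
The plan is to prove the two implications separately, drawing on the structural results just established. The forward direction is a direct combination of Lemma~\ref{l:open} and Corollary~\ref{c:cardopen}, while the reverse direction is an openness argument that leverages the clopen hypothesis through Lemma~\ref{l:card}.

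For the forward direction, assume $\vpi$ is a local homeomorphism. Corollary~\ref{c:cardopen} (which holds for any CSLI map) gives that $\{y : |\vpi^{-1}(y)| \leq k\}$ is open for every $k$, and Lemma~\ref{l:open} adds that $\{y : |\vpi^{-1}(y)| \geq k\}$ is open as well under the local homeomorphism hypothesis. Intersecting these two for consecutive values of $k$ shows that each $X_j$ is open. By Corollary~\ref{c:bddinv} only finitely many $X_j$ are nonempty, so $X$ is a finite disjoint union of open sets and each $X_j$ is therefore also closed.

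For the reverse direction, assume each $X_j$ is clopen. I will show that $\vpi$ is an open map; since a continuous, locally injective, open map is a local homeomorphism, this will finish the proof. Let $V \subseteq X$ be open, let $y_0 = \vpi(x_0) \in \vpi(V)$, write $j = |\vpi^{-1}(y_0)|$, and label $\vpi^{-1}(y_0) = \{x_0 = x^{(1)}, x^{(2)}, \dots, x^{(j)}\}$. Using Lemma~\ref{l:card} and Remark~\ref{r:disjoint}, I choose $\ep > 0$ small enough that the $\ep$-neighborhoods $\sN_i$ of $x^{(i)}$ are pairwise disjoint, $\vpi$ is injective on each $\sN_i$, $\sN_1 \subseteq V$, and there is a neighborhood $\sU'$ of $y_0$ with $\vpi^{-1}(\sU') \subseteq \bigcup_i \sN_i$. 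Set $\sU := \sU' \cap X_j$; this is open because $X_j$ is open by hypothesis, and it still contains $y_0$. For any $y \in \sU$ the $j$ preimages of $y$ are distributed among the $j$ disjoint sets $\sN_i$, on each of which $\vpi$ is injective, so each $\sN_i$ contains exactly one preimage of $y$. In particular $\sN_1 \subseteq V$ contains one, so $y \in \vpi(V)$, and thus $\sU \subseteq \vpi(V)$.

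The one substantive point is the use of the openness of $X_j$ to pass from $\sU'$ to $\sU$: without it, some nearby $y$ could have fewer than $j$ preimages and the specific sheet $\sN_1$ sitting inside $V$ could fail to meet $\vpi^{-1}(y)$, breaking the argument. Once that restriction is imposed, counting preimages against disjoint injectivity neighborhoods forces a preimage into each $\sN_i$ and everything else is routine.
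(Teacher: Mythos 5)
Your proof is correct and follows essentially the same route as the paper: the forward direction combines Corollary~\ref{c:cardopen} and Lemma~\ref{l:open} exactly as the text does, and the converse rests on the same counting argument (restrict to $X_j$ so every nearby point has exactly $j$ preimages, one in each of the $j$ disjoint injectivity neighborhoods), differing only cosmetically in that you conclude via openness of $\vpi$ together with the Preliminaries' remark that an open CSLI map is a local homeomorphism, rather than exhibiting the sheet homeomorphisms onto $\sU$ directly. The one small slip is that the neighborhood furnished by Lemma~\ref{l:card} is a closed ball, so $\sU'\cap X_j$ need not be open as stated; intersect $X_j$ with the open $\de$-ball (or the interior of $\sU'$) instead.
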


\begin{proof} If $\vpi$ is a local homeomorphism, it follows from
Corollary~\ref{c:cardopen} and Lemma~\ref{l:open} that each $X_j$ is
open. But then $\cup_{i\neq j} X_i$ is open, so $X_j$ is also
closed.

Conversely suppose that each $X_j$ is clopen. Then, referring to the
proofs of Corollary~\ref{c:bddinv} and Lemma~\ref{l:card}, we may
choose a sufficiently small neighborhood $\sU$ of $y_0$ so that $\sU
\subset X_N.$ Then each point $y$ of $\sU$ has exactly $N$ inverse
images. We can choose $\ep$ sufficiently small so that $\vpi|\sN_j$
is injective. Set $\sW_j = \vpi^{-1}(\sU)\cap \sN_j.$ Then
$\vpi|\sW_j$ is a homeomorphism from $\sW_j$ onto $\sU,\ j = 1,
\dots, N.$ Thus, $\vpi$ is a local homeomorphism.
\end{proof}

\begin{remark} \label{r:open} Suppose $(\vpi, X)$ is a CSLI system, $y_0 \in X,\
|\vpi^{-1}(y_0)| = N,$ and  $\sU$ is a compact neighborhood of $y_0$
such that  $\vpi^{-1}(\sU) = \cup \sN_j$ where the $\sN_j$ are
pairwise disjoint, and $\vpi|\sN_j$ is injective, $j = 1, \dots, N$.
It does not follow that $\vpi|\sN_j$ is a homeomorphism of $\sN_j$
onto $\sU.$  Generally, $\vpi|\sN_j$ will be onto a proper subset of
$\sU.$ The Baire Category Theorem asserts that for some $j,\
\vpi(\sN_j)$ will have nonempty interior, but there is no reason
that the point $y_0$ must lie in the interior.
\end{remark}

\subsection{Admissible CSLI systems}
\begin{definition} Let $(\vpi, X)$ be a CSLI dynamical system, and
$x_1 \in X.$ We say $\vpi$ is \emph{locally open} at $x_1$ if there
is an open neighborhood $\sN$ of $x_1$ such that the restriction
$\vpi|\sN$ is an open map of $\sN $ into $X.$
\end{definition}

\begin{lemma} \label{l:locopenadmiss} Suppose $(\vpi, X)$ is a
CSLI dynamical system, $y_0 \in X,$ \ \mbox{$\vpi^{-1}(y_0) = \{
x_1, \dots x_N\}$}.  Suppose the system admits a cocycle $\om$ and
$\om(x_1) > 0.$  Then $\vpi$ is locally open at $x_1.$
\end{lemma}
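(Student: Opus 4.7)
The plan is to use the cocycle identity $\sum_{\vpi(x)=y}\om(x)=1$ together with the continuity of $\om$ to detect the preimages that would witness a failure of openness at $x_1$. First, by Lemma~\ref{l:card} and Remark~\ref{r:disjoint}, I would choose $\ep>0$ small enough that the balls $\sN_j=\{x:\rho(x,x_j)\leq\ep\}$ are pairwise disjoint, that $\vpi|\sN_j$ is injective for each $j$, and that $\om>0$ on $\sN_1$ (using continuity of $\om$ and the hypothesis $\om(x_1)>0$). The lemma then produces $\de>0$ such that the compact neighborhood $\sU=\{y:\rho(y,y_0)\leq\de\}$ of $y_0$ satisfies $\vpi^{-1}(\sU)\subset\bigcup_j\sN_j$.

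The core step is to show $\vpi(\sN_1)$ contains a neighborhood of $y_0$, by contradiction. Suppose otherwise and take $y_n\to y_0$ with $y_n\notin\vpi(\sN_1)$. For large $n$, $y_n\in\sU$, so $\vpi^{-1}(y_n)\subset\bigcup_{j=2}^N\sN_j$, and by injectivity of $\vpi|\sN_j$ each $\sN_j$ contains at most one preimage $x_j^{(n)}$ of $y_n$. Let $I_n\subset\{2,\dots,N\}$ index the $\sN_j$ that actually meet $\vpi^{-1}(y_n)$. Passing to a subsequence, I may assume $I_n\equiv I$ is constant and $x_j^{(n)}\to\tilde{x}_j\in\sN_j$ for each $j\in I$; continuity of $\vpi$ gives $\vpi(\tilde{x}_j)=y_0$, and injectivity of $\vpi|\sN_j$ forces $\tilde{x}_j=x_j$. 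Cocycle property~(2) at $y_n$ reads $\sum_{j\in I}\om(x_j^{(n)})=1$, and continuity of $\om$ yields $\sum_{j\in I}\om(x_j)=1$ in the limit. But~(2) at $y_0$ gives $\sum_{j=2}^N\om(x_j)=1-\om(x_1)<1$, which since $I\subset\{2,\dots,N\}$ and $\om\geq 0$ contradicts the equation obtained in the limit.

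To promote openness at $x_1$ to local openness on $\sN:=\operatorname{int}(\sN_1)$, I would rerun the preceding argument at an arbitrary $x'\in\sN$. Since $\sN\subset\sN_1$, $\om(x')>0$, so applying Lemma~\ref{l:card} at $y'=\vpi(x')$ provides disjoint compact neighborhoods of $x'$ and of the remaining preimages of $y'$; choosing the neighborhood of $x'$ small enough to lie inside a prescribed open $V\subset\sN$ with $x'\in V$, the same contradiction argument shows that its image, and hence $\vpi(V)$, contains a neighborhood of $y'$. Therefore $\vpi|\sN$ is an open map, and $\vpi$ is locally open at $x_1$.

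The main obstacle is controlling the preimages of $y_n$ uniformly as $y_n\to y_0$: Lemma~\ref{l:card} is what confines them to $\bigcup_j\sN_j$, and injectivity of $\vpi|\sN_j$ is what limits each $\sN_j$ to at most one preimage, so that the cocycle sum becomes a finite sum amenable to termwise passage to the limit via continuity of $\om$. Once these compactness-and-continuity ingredients are in place, the cocycle identity together with $\om(x_1)>0$ delivers the contradiction essentially immediately.
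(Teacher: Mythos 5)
Your proof is correct, and its engine is the same as the paper's: confine the preimages of points near $y_0$ to the disjoint injective neighborhoods supplied by Lemma~\ref{l:card}, then use cocycle property (2) together with continuity of $\om$ and $\om(x_1)>0$ to rule out the existence of nearby $y$ with no preimage near $x_1$. You implement the contradiction by sequential compactness ($y_n\to y_0$, pass to subsequences, take limits termwise in the finite cocycle sum), whereas the paper makes the uniform estimate $|\om(x)-\om(x_i)|<\eta/N$ on each neighborhood and bounds $\sum_{i\in I}\om(x_i')<1$ directly for any offending $y\in\sU$; these are interchangeable. The more substantive divergence is the endgame. The paper's uniform version yields the exact equality $\vpi(\sW_1)=\sU$, so $\vpi|\sW_1$ is a continuous bijection of a compact set onto $\sU$, hence a homeomorphism, and openness on $\vpi^{-1}(\sU^o)\cap\sV$ follows in one line. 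Your version only gives that $\vpi(\sN_1)$ contains a neighborhood of $y_0$, so you must rerun the entire argument at every $x'\in\operatorname{int}(\sN_1)$ --- which is legitimate, since you arranged $\om>0$ on all of $\sN_1$ and Lemma~\ref{l:card} applies at every $y'=\vpi(x')$ --- but it costs an extra layer of quantification. Both routes are sound; the paper's buys a cleaner finish via the compact-Hausdorff homeomorphism fact, while yours gets by with the weaker ``image contains a neighborhood'' conclusion at each point.
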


\begin{proof} We can take a compact neighborhood $\sU$ of $y_0$ so
that $\vpi^{-1}(\sU) = \cup_{i=1}^N \sW_i$ where $x_i \in \sW_i, \ 1
\leq i \leq N$ and the $\sW_i$ are pairwise disjoint (and compact),
and so that the restriction of $\vpi$ to $\sW_i$ is injective. We
can also assume, by taking $\sU$ sufficiently small and applying
Lemma~\ref{l:card}, that $\eta := \min\{ \om(x):\ x \in \sW_1\} >
0,$ and for $x \in \sW_i,\ |\om(x) - \om(x_i)| < \frac{\eta}{N},\ 1
\leq i \leq N.$

We claim $\vpi(\sW_1) = \sU.$ If not, there is some $y \in \sU$ such
that $\vpi^{-1}(y) = \{x_i': \ x_i' \in \sW_i,\ i \in I\}$ where $I$
is a subset of $\{ 2, \dots, N\}.$ But then
\begin{align*}
\sum_{\vpi(x) = y} \om(x) &= \sum_{i \in I} \om(x_i') \\
    &< \sum_{i \in I} [ \om(x_i) + \frac{\eta}{N}] \\
    &< \sum_{i \in I} \om(x_i) + \eta \\
    &\leq 1
\end{align*}
contradicting the cocycle property. Thus the claim is verified.

Let $\sV$ be an open set in $X$ containing $\sW_1$ and disjoint from
$\sW_2, \dots, \sW_N.$ Let $\sU^o$ be the interior of $\sU.$  Now
$\vpi^{-1}(\sU^o)\cap \sV =: \sY_1$ is an open neighborhood of $x_1$
disjoint from $\sW_2, \dots, \sW_N,$ so is contained in $\sW_1,$ and
hence in the interior $\sW_1^o.$  We claim that the restriction
$\vpi|\sY_1$ is an open map of $\sY_1$ into $X.$  Let $\sO \subset
\sY_1$ be open.  Since $\vpi|\sW_1$ is a one to one continuous map
of $\sW_1$ onto $\sU,$ it is a homeomorphism.  Thus, $\vpi(\sO)$ is
open in $\sU.$  But $\vpi(\sO) \subset \sU^o,$ so that $\vpi(\sO)$
is open in $X$. Hence, $\vpi$ is locally open at $x_1.$
\end{proof}

\begin{corollary} \label{c:locopenadmiss} A necessary condition for a CSLI system $(X,
\vpi)$ to admit a cocycle is: for every $y \in X$ there exists a
point $x \in \vpi^{-1}(y)$ such that $\vpi$ is locally open at $x.$
\end{corollary}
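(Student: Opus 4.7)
The plan is to derive the corollary as an immediate consequence of Lemma~\ref{l:locopenadmiss} together with the normalization condition (2) in Definition~\ref{d:cocycle}. Suppose $(X, \vpi)$ admits a cocycle $\om$, and fix an arbitrary $y \in X$. By Lemma~\ref{l:bddpoint}, the fiber $\vpi^{-1}(y) = \{x_1, \dots, x_N\}$ is finite.

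The key observation is that the cocycle values on this fiber cannot all be zero: since $\sum_{\vpi(x) = y} \om(x) = 1$ and the sum is finite, at least one index $i$ must satisfy $\om(x_i) > 0$. For that particular preimage $x_i$, Lemma~\ref{l:locopenadmiss} applies directly and yields that $\vpi$ is locally open at $x_i$. Since $y$ was arbitrary, the stated necessary condition follows.

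There is essentially no obstacle here; the work has all been done in the preceding lemma. The only subtlety worth noting is that we need the fiber to be finite (so that positivity of the total sum forces positivity of some summand), but this is guaranteed by Lemma~\ref{l:bddpoint}, and in fact uniformly bounded by Corollary~\ref{c:bddinv}. The corollary is thus a one-paragraph deduction from the lemma and the definition of a cocycle.
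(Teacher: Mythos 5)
Your proposal is correct and is precisely the argument the paper intends: the corollary is stated without proof because it follows immediately from Lemma~\ref{l:locopenadmiss} once one notes that the normalization $\sum_{\vpi(x)=y}\om(x)=1$ over the finite fiber forces $\om(x_i)>0$ for some preimage $x_i$. Your write-up just makes this implicit step explicit.
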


\begin{corollary} \label{c:strictlypos} Let $(\vpi, X)$ be a CSLI
system.  Then the system admits a strictly positive cocycle if and
only if $\vpi$ is a local homeomorphism.
\end{corollary}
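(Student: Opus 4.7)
The plan is to deduce each direction from tools already developed. For the forward direction, Lemma~\ref{l:locopenadmiss} already promotes positivity of $\om$ at a preimage into local openness of $\vpi$ at that point; strict positivity therefore upgrades ``locally open somewhere over each $y$'' to ``locally open everywhere.'' For the converse, Corollary~\ref{c:clopen} provides a clopen decomposition of $X$ by fibre cardinality, and this is precisely the structure needed to define a continuous strictly positive cocycle by hand.

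For $(\Rightarrow)$, assume $\om$ is a cocycle with $\om(x) > 0$ for every $x \in X$. Fix an arbitrary $x_0 \in X$ and set $y_0 = \vpi(x_0)$; then $x_0 \in \vpi^{-1}(y_0)$ and $\om(x_0) > 0$, so Lemma~\ref{l:locopenadmiss} furnishes an open neighborhood on which $\vpi$ is open. Intersecting with a neighborhood on which $\vpi$ is injective (available by local injectivity), I obtain an open neighborhood $\sN$ of $x_0$ such that $\vpi|\sN$ is simultaneously continuous, injective, and open. Openness forces $\vpi(\sN)$ to be open in $X$ and makes the set-theoretic inverse continuous, so $\vpi|\sN$ is a homeomorphism onto an open set. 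As $x_0$ was arbitrary, $\vpi$ is a local homeomorphism.

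For $(\Leftarrow)$, assume $\vpi$ is a local homeomorphism. By Corollary~\ref{c:bddinv} there exists $N \in \bbN$ with $|\vpi^{-1}(y)| \leq N$ for all $y$, so $X = \bigsqcup_{j=1}^N X_j$ where $X_j = \{y \in X : |\vpi^{-1}(y)| = j\}$. By Corollary~\ref{c:clopen} each $X_j$ is clopen, and hence so is each $\vpi^{-1}(X_j)$ by continuity of $\vpi$. Define
\[ \om(x) = \frac{1}{|\vpi^{-1}(\vpi(x))|}, \qquad x \in X. \]
Then $\om > 0$, and since $\om \equiv 1/j$ on the clopen set $\vpi^{-1}(X_j)$, $\om$ is continuous on $X$. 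Finally, for each $y \in X_j$,
\[ \sum_{\vpi(x) = y} \om(x) = j \cdot \frac{1}{j} = 1, \]
so $\om$ is a strictly positive cocycle. There is no serious obstacle here; both directions are essentially immediate once Lemma~\ref{l:locopenadmiss} and Corollary~\ref{c:clopen} are in hand.
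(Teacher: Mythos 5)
Your proof is correct and follows essentially the same route as the paper: Lemma~\ref{l:locopenadmiss} applied at every point for the forward direction, and the clopen fibre-cardinality decomposition of Corollary~\ref{c:clopen} to build the cocycle $\om = 1/j$ for the converse. Your statement of the converse is in fact slightly more careful than the paper's, since you correctly place the value $1/j$ on $\vpi^{-1}(X_j)$ rather than on $X_j$.
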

\begin{proof} By Lemma~\ref{l:locopenadmiss} if the cocycle is
positive at every point of $X$, then $\vpi$ is locally open at every
point, hence it is an open map.  Thus $\vpi$ is a local
homeomorphism.

Conversely, if $\vpi$ is a local homeomorphism, let the sets $X_j$
be as in Corollary~\ref{c:clopen} and set $Z_j = \vpi(X_j).$  Then
the sets $Z_j$ are pairwise disjoint and clopen, and their union is
$X$. If the cocycle $\om$ is defined to be $\frac{1}{j}$ on $X_j,$
then $\om$ is strictly positive.
\end{proof}

Recall that a metric space is \emph{zero dimensional} if it admits a
basis for the topology which is both closed and open.

\begin{proposition} \label{p:zerodim} Let $(\vpi, X)$ be CSLI, and suppose $X$ is
zero-dimensional. Then the necessary condition of
Corollary~\ref{c:locopenadmiss} for a cocycle to exist is also
sufficient.
\end{proposition}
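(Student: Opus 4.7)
The plan is to exploit zero-dimensionality to produce a finite clopen partition of $X$ over each piece of which $\vpi$ admits a continuous section, and then take $\om$ to be the characteristic function of the union of these sections.

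I begin by localizing at an arbitrary $y \in X$. Pick $x_1 \in \vpi^{-1}(y)$ at which $\vpi$ is locally open and apply Lemma~\ref{l:card} together with Remark~\ref{r:disjoint} to produce a compact neighborhood $\sU$ of $y$ and pairwise disjoint compact neighborhoods $\sN_1, \dots, \sN_N$ of the preimages, chosen small enough that $\vpi|\sN_j$ is injective and $\vpi^{-1}(\sU) \subset \cup_j \sN_j$. Local openness at $x_1$ supplies an open set $\sM$ with $x_1 \in \sM \subset \sN_1^o$ such that $\vpi(\sM)$ is open, so $\vpi(\sM)\cap \sU^o$ is an open neighborhood of $y$. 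Zero-dimensionality then furnishes a clopen neighborhood $V_y$ of $y$ contained in $\vpi(\sM)\cap \sU^o$.

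The key observation is that $\vpi^{-1}(V_y)$ is clopen as the preimage of a clopen set under $\vpi$, is contained in $\cup_j \sN_j$, and the $\sN_j$ are at positive mutual distance. Hence $W_y := \vpi^{-1}(V_y) \cap \sN_1$ is clopen in $X$, and $\vpi|W_y$ is a continuous bijection from $W_y$ onto $V_y$ (using $V_y \subset \vpi(\sM) \subset \vpi(\sN_1)$ and the injectivity of $\vpi|\sN_1$); since $W_y$ is compact and $V_y$ is Hausdorff, it is in fact a homeomorphism.

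By compactness of $X$, I extract a finite subcover $V_{y_1}, \dots, V_{y_k}$ of $X$ with associated clopen sections $W_{y_1}, \dots, W_{y_k}$, then disjointify to a clopen partition by setting $V_i' = V_{y_i} \setminus \bigcup_{j<i} V_{y_j}$ and $W_i' = W_{y_i} \cap \vpi^{-1}(V_i')$. Each $W_i'$ is clopen and $\vpi|W_i' : W_i' \to V_i'$ remains a homeomorphism. I define $\om := \chi_S$ where $S = \bigcup_{i=1}^k W_i'$; this set is clopen, so $\om$ is continuous. For any $y \in X$, $y$ belongs to a unique $V_i'$, and the only preimage of $y$ in $S$ is the unique point of $W_i'$ above $y$, yielding $\sum_{\vpi(x) = y}\om(x) = 1$. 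The cocycle identity (4) then determines $\om(n,\cdot)$ for all $n \in \bbN$. The one step that requires care is the first: producing a clopen neighborhood of $y$ over which $\vpi$ admits a clopen section. Once local openness at $x_1$ is combined with the separation of the other preimages through zero-dimensionality, the remainder of the argument is bookkeeping.
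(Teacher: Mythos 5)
Your proof is correct and follows essentially the same approach as the paper's: cover $X$ by a finite disjoint family of clopen sets over each of which $\vpi$ admits a clopen section (obtained from local openness at a preimage point plus zero-dimensionality), and let $\om$ be the characteristic function of the union of these sections. Your version is simply a more carefully detailed execution; in particular you make explicit the step the paper leaves implicit, namely that local openness at $x_1$ is what guarantees the distinguished clopen piece $W_y$ maps \emph{onto} the clopen neighborhood $V_y$, which is exactly what the cocycle normalization $\sum_{\vpi(x)=y}\om(x)=1$ requires.
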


\begin{proof} By compactness, we can obtain a finite cover of
disjoint, clopen sets $\sU_i,\ 1 \leq i \leq m,$ so that for each
$\sU_i,\ \vpi^{-1}(\sU_i)$ is a union of, say $n_i$ disjoint clopen
sets, $\sW_{i, j}$, and the restriction of $\vpi$ to each of them is
injective. We may enumerate them so that $\vpi|\sW_{i, 1}$ is an
open map. Note that since the sets $\sU_i$ are disjoint, we have
that $\sW_{i, j} \cap \sW_{i', j'} = \emptyset $ if $(i, j) \ne (i',
j').$ Thus, the sets $\sW_{i, j}$ constitute a finite, pairwise
disjoint cover of $X$ of clopen sets.

We now define a cocycle $\om$ on $X$ as follows: for each $ i,\ 1
\leq i \leq m$ let $\om(x) = 1 $ for all $x \in \sW_{i, 1}$ and
$\om(x) = 0$ for $x \in \sW_{i, j}$ for $ j > 1.$ Because the sets
where $\om$ is $1$ or $0$ are clopen, $\om$ is continuous.  And the
cocycle condition, that for $y \in X,$
\[ \sum_{\vpi(x) = y} \om(x) = 1 \]
is satisfied, since by construction there is one $x$ for which
$\om(x) = 1,$ and for the remaining $x$ for which $\vpi(x) = y,\
\om(x) = 0.$
\end{proof}

\begin{remark} \label{r:unique} Note that if for every $y \in X$
there exists a unique $x \in \vpi^{-1}(y)$ such that $\vpi$ is
locally open in a neighborhood of $x$, then the cocycle $\om$ is
unique, and hence does not depend on the choice of the cover
$\sU_i.$
\end{remark}

\begin{example} \label{e:notadmiss} This is an example of a semigroup $\sP = \bbN$ of
a CSLI dynamical system which is not admissible.

Let $\vpi: \Pi_{n\in \bbZ} \bbT_n \to \Pi_{n\in \bbZ} \bbT_n $ where
$\bbT_n = \bbT = [0, 1),$ as follows:  for a point $\bold{x} =
(x_n)$ in the product space, set
\[ \vpi(\bold{x}) = \bold{y} \ \text{where } y_{n-1} = x_n \] for all
$ n \neq 1$ and $ y_0 = 2x_1 \ (mod 1).$  Note that $\vpi$ is a
local homeomorphism of $ \Pi_{n\in \bbZ} \bbT_n .$

Let $Z \subset \Pi_{n\in \bbZ} \bbT_n$ consist of those sequences
$\bold{x} = (x_n)$ satisfying: for all $n \geq 1,\ 0 \leq x_n \leq
\frac{1}{2}.$ Clearly $Z$ is closed, and $\vpi(Z) \subset Z.$  We
take
\[ X = \cap_{n=0}^{\infty} \vpi^n(Z) ,\]
where $\vpi^0$ is the identity, and for $n > 1,\ \vpi^n$ is the
$n$-fold composition of $\vpi$ with itself. Then $\vpi(X) = X.$

Changing notation so $\vpi$ refers to the restriction of $\vpi$ to
$X$, the dynamical system $(\vpi, X)$ is CSLI.

To show that $(\vpi, X)$ is not admissible, we suppose to the
contrary that $\om$ is a cocycle for $\vpi.$ Set
\[ \bold{y}^0 = (\dots, 0, \underset{0}{0}, \frac{1}{2},
\frac{1}{2}, \dots)\] where the underset $0$ denotes the $0$-th
position in the array. Note that $\vpi^{-1}(\bold{y}^0) = \{
\bold{y}^0, \bold{w}^0\}$ where
\[ \bold{w}^0 = (\dots, 0, \underset{0}{0}, 0, \frac{1}{2},
\frac{1}{2}, \dots).\] First we show that $\om(\bold{y}^0) = 1.$ To
this end, define
\[ \bold{y}(t) = ( \dots, t, \underset{0}{t}, \frac{t}{2},
\frac{t}{2}, \dots) \] and note that, for $t > 0,\
\vpi^{-1}(\bold{y}(t)) = \{ \bold{y}(t) \}.$ Hence $\om(\bold{y}(t))
= 1$ for $ t > 0.$  Since $\bold{y}(t) \to \bold{y}^0$ as $ t \to 1$
continuity of $\om$ forces $\om(\bold{y}^0) = 1.$

Next we claim that $\om(\bold{w}^0) = 1.$ To this end, we define
\[ \bold{u}(t) = (\dots, t, \underset{0}{t}, \frac{1}{2} - t, \frac{1}{2} -
t, \dots )\] for $ 0 < t < \frac{1}{2}.$ To see that $\bold{u}(t)
\in X,$ note first that $\bold{u}(t) \in Z.$  Let $n \in \bbN, \ n
\geq 1$ and set
\[ \bold{w}(t) = (\dots, t, \underset{0}{t}, \frac{t}{2}, \dots,
\underset{n}{\frac{t}{2}}, \frac{1}{2} - t, \frac{1}{2} - t, \dots
).
\] Then $\bold{w}(t) \in Z$ and $\vpi^n(\bold{w}(t)) = \bold{u}(t).$  Thus, $\bold{u}(t)
\in \vpi^n(Z)$ for every $n \geq 0,$ so $\bold{u}(t) \in X.$ Now set
$n = 1$.  The same argument shows that $\bold{w}(t) \in X, $ and
furthermore $\bold{w}(t)$ is the single inverse image of
$\bold{u}(t).$  Thus, $\om(\bold{w}(t)) = 1.$  As $ t \to 0,\
\bold{w}(t) \to \bold{w}^0.$ Continuity forces $\om(\bold{w}^0) =
1.$  But the cocycle condition $\om(\bold{y}^0) + \om(\bold{w}^0) =
1$ is violated, so no cocycle exists and the system $(\vpi, X)$ is
not admissible, and in particular, $\vpi$ is not a local
homeomorphism.

\end{example}

\begin{example} \label{e:admissCSLI} This is an example of an admissible
CSLI system which is not a local homeomorphism. First we construct
$X$ as follows: let $Z$ be the space obtained from $\bbR$ by
replacing each integer $n \leq 0$ by two points $n^-,\ n^+$ with
$n^- < n^+$ so that $n^-$ is the immediate predecessor of $n^+$. $Z$
is an ordered set which is topologized  by taking as a base for the
open sets all sets of the form $(a, b),\ a < b \in Z$ and $(a, n^-]$
and $[n^+, b)$  for $n,\in \bbZ,\ n  \leq 0$ and $a < n < b.$  Set
$X = Z\cup \{-\infty, +\infty\}$ the two-point compactification of
$Z$.

Note that $X$ is a compact metrizable space.  Define $\vpi: X \to X$
by taking the points $\pm\infty$ to be fixed, and for $ x \neq \pm
\infty$ setting
\begin{equation*}
\vpi(x) =
\begin{cases}
x + 1 \text{ if } x \leq 0^-, \ x \notin \bbZ \\
(n+1)^- \text{ if } x = n^-,\ n \leq -1 \\
(n+1)^+ \text{ if } x = n^+, \ n \leq -1 \\
1 \text{ if } x = 0^-\\
 x \text{ if } x \geq 0^+
\end{cases}
\end{equation*}
Observe that $\vpi$ is CSLI, but
 that $\vpi$ is not a local homeomorphism because it is not an open
 map in a neighborhood of $0^-.$

Notice that the points $y,\ 0^+ \leq y \leq 1$ have two pre-images,
and all other points have one pre-image. Define a cocycle on $X$ by
$\om(x) = 1,\ 0^+ \leq x \leq +\infty,\ \om(x) = 0, \ -1^+ \leq x
\leq 0^-,$ and $ \om(x) = 1,\ -\infty \leq x \leq -1^-.$  Then $\om$
is a cocycle, but not strictly positive.

Note there does not exist a strictly positive cocycle. This follows
from Corollary~\ref{c:strictlypos}, but can also be seen directly.
For $y = 1$ has two pre-images, namely $0^- $ and $1$, but any point
$ y
> 1$ has only one preimage, $\vpi^{-1}(y) = y$ so that for such $y$
necessarily $\om(y) = 1.$ Continuity of $\om$ forces $\om(1) = 1,$
hence $\om(0^-) = 0.$
\end{example}

\begin{remark} Define the conditional expectation
\[ E(f)(x) = \al \circ \sL(f)(x) = \sum_{\vpi(u) = \vpi(x)}
\om(u)f(u)\] where $\al(g) = g\circ \vpi,\ g \in C(X).$

Then if $\om$ is not strictly positive, the conditional expectation
can be degenerate. Indeed, suppose
 $\om(x) = 0$ in a neighborhood $\sU$ of a point
$x_0$.  Suppose $f$ is a nonnegative function supported in $\sU$ and
that $\vpi$ is injective on $\sU.$ Then for $x \in X$
\begin{align*}
E(f)(x) &= \sum_{\vpi(t) = \vpi(x)} f(t)\om(t) \\
    &= 0
\end{align*}
since  $\om$ is zero where $f$ is nonzero.

Thus, the conditional expectation associated to the cocycle in
Example~\ref{e:admissCSLI} is degenerate.  However, it is possible
to define a cocycle on the space $X$ in that example so that it is
nondegenerate, as we show.
\end{remark}

\begin{example} \label{e:admissnondeg} Let $(\vpi, X)$ be as in
Example~\ref{e:admissCSLI}. Define a cocycle $\om$ as follows:
$\om(0^-) = \om(0^+) = 0. $ For $ 0^+ < x < 1,$ set $\om(x) = x$ and
$\om(x-1) = 1-x.$ As before, for $ x \geq 1$ we are forced to have
$\om(x) = 1.$ Also as before, we must have $\om(x) = 1$ for $x \leq
-1^-.$ Since $\vpi^{-1}(0^+) = \{-1^+, 0^+\}$ and $\om(0^+) = 0,$
necessarily $\om(-1^+) = 1.$ The cocycle vanishes on the set
$\{0^-,\ 0^+\},$ which has no interior. Thus the resulting
expectation $E$ is nondegenerate.
\end{example}

\begin{example} \label{e:notadmiss2} We take $X$ as in Example~\ref{e:admissCSLI}
and define $\vpi$ as it is there for $x \leq 0^-.$  For $x \geq 0^+$
we define $\vpi$ by: $\vpi(0^+) = 1$ and $\vpi(x) = x+1$ for $ x >
0^+.$ Then the only point with more than one inverse image is $1$,
and $\vpi^{-1}(1) = \{0^-, \ 0^+\}.$  If the system were admissible,
then necessarily $\om(x) = 1$ for all $x \in X,\ x \neq 0^-,\ 0^+.$
Then continuity would force $\om(0^-) = 1 = \om(0^+),$  violating
cocycle property (ii).  Thus this provides another example of a non
admissible system.
\end{example}

\subsection{Finitely generated free semigroups}
Next we want to consider finitely generated free semigroups; that
is, semigroups isomorphic to $(\bbZ^+)^k,$ where $\bbZ^+ = \bbN \cup
\{0\}.$ Recall that (cf. \cite{Fu}) in an abelian group $\sG$ a
finite set of elements $\{ a_1, \dots, a_k\}$ with $a_i \neq a_j$
for $ i \neq j$ is \emph{independent} if any relation of the form
\[ n_1 a_1 + \cdots + n_k a_k = 0, \quad (n_i \in \bbZ) \]
implies
\[ n_1 = \cdots = n_k = 0.\]

In our case, we are dealing with elements of an abelian semigroup,
not a group. We could of course take recourse to the fact that the
semigroup is embedded in a (smallest) abelian group, and make use of
this definition in the ambient group.  However, the semigroup is
represented by maps which may not be invertible, and an ambient
group is quite removed from the context of the dynamical system, so
it is natural to want to express independence in the context of the
semigroup. As it happens, it is easy to recast the definition of
independence in the semigroup context.

\begin{definition} Let $a_1, \dots, a_k$ be elements of an abelian
semigroup $\sP.$ The set $\{a_1, \dots, a_k\}$ with $a_i \neq a_j$
for $i \neq j$ will be called \emph{independent} if for any nonempty
subset $E \subset \{ 1, \dots, k\}$ and nonnegative integers $n_1,
\dots, n_k$ the relation
\[ \sum_{j \in E} n_j a_j = \sum_{j \in E^c} n_j a_j \]
implies
\[ n_1 = \cdots = n_k = 0.\]
In case the complement $E^c = \emptyset,$ we interpret the right
side of the equation to be zero.
\end{definition}

Let $\sP$ be an abelian semigroup with $0$, and let $\{a_1, \dots,
a_k\}$ be a set of independent generators of $\sP.$
\begin{proposition} \label{p:freesemigroup}
Let $\sP$ act on the compact metric space $X$. Then the the action
is admissible iff each $\vpi_{a_j}$ is an admissible action, $ 1
\leq j \leq k,$ and
\[ \om_i(1, x)\om_j(1, \vpi_i(x)) = \om_j(1, x)\om_i(1, \vpi_j(x))
\quad \ddag \] for all $i, j \in \{1, \dots, k\}$, where $\om_i$ is
the cocycle associated with the subsemigroup $\bbZ^+\,a_j,\ 1 \leq j
\leq k,$ and we have written $\vpi_j$ for $\vpi_{a_j}.$
\end{proposition}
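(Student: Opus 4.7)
My plan is to handle the two directions separately. For the forward direction, I would take a cocycle $\om$ for $\sP$ and restrict to each cyclic subsemigroup $\bbZ^+ a_i$ by setting $\om_i(n, x) := \om(n a_i, x)$; the axioms of Definition~\ref{d:cocycle} transfer immediately. Condition $(\ddag)$ then drops out by applying the cocycle identity to $\om(a_i + a_j, x)$ in two orders: expanding as ``$a_i$ then $a_j$'' yields $\om_i(1, x)\om_j(1, \vpi_i(x))$, while ``$a_j$ then $a_i$'' yields $\om_j(1, x)\om_i(1, \vpi_j(x))$, and commutativity of $\sP$ forces these to agree.

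For the converse, independence of $\{a_1,\dots,a_k\}$ lets me write each $m \in \sP$ uniquely as $m = \sum_{j=1}^k m_j a_j$ with $m_j \in \bbZ^+$. Fixing the ordering $a_1,\ldots,a_k$, I would define
\[ \om(m, x) := \prod_{j=1}^k \om_j\bigl(m_j,\, \vpi_{a_1}^{m_1}\circ\cdots\circ\vpi_{a_{j-1}}^{m_{j-1}}(x)\bigr). \]
Continuity and non-negativity are immediate. For the sum-to-one condition, I would parametrize preimages of $x$ under $\vpi_m$ (which, by commutativity, may be written as $\vpi_{a_k}^{m_k}\circ\cdots\circ\vpi_{a_1}^{m_1}$) by chains $y = z_0, z_1, \ldots, z_{k-1}, z_k = x$ satisfying $\vpi_{a_j}^{m_j}(z_{j-1}) = z_j$, and collapse the nested sum
\[ \sum_{\vpi_m(y) = x} \om(m, y) = \sum_{z_{k-1}} \om_k(m_k, z_{k-1}) \sum_{z_{k-2}} \om_{k-1}(m_{k-1}, z_{k-2}) \cdots \sum_{z_0} \om_1(m_1, z_0) \]
from the inside out, applying property~(2) for each $\om_j$ (extended to $m_j$-fold iterates by a brief induction via the one-variable cocycle identity).

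The main obstacle is the full cocycle identity $\om(m+n, x) = \om(m, x)\,\om(n, \vpi_m(x))$. Expanding both sides using the single-generator cocycle identities produces $2k$-fold products whose factors have matching types $\om_j(m_j, \cdot)$ and $\om_j(n_j, \cdot)$ but mismatched evaluation points. To reconcile them, I plan to prove a ``swap lemma'' extending $(\ddag)$:
\[ \om_i(p, x)\,\om_j(q, \vpi_{a_i}^p(x)) = \om_j(q, x)\,\om_i(p, \vpi_{a_j}^q(x)) \qquad (i \neq j,\ p, q \in \bbZ^+), \]
by induction on $p+q$: the base case $p = q = 1$ is exactly $(\ddag)$, and in the inductive step I would peel off a single $\om_i$ factor via the one-variable cocycle identity, apply the inductive hypothesis, and use commutativity $\vpi_{a_i}\vpi_{a_j} = \vpi_{a_j}\vpi_{a_i}$ to line up arguments. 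Iterated application of this swap lemma then rearranges the two $2k$-fold products into identical form. This bookkeeping is the delicate part; continuity, non-negativity, and sum-to-one are routine once the definition and swap lemma are in hand.
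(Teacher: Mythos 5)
Your argument is correct, but the converse direction takes a genuinely different route from the paper's. The paper defines $\om(m,x)$ by induction on $|m| = m_1 + \cdots + m_k$: having defined $\om$ for $|m| \leq N$, it sets $\om(n+m,x) = \om(n,x)\,\om(m,\vpi_n(x))$ for $|n|+|m|=N+1$ and proves this is independent of the chosen decomposition $n+m=p+q$ by reducing to the case $|n-p|=1$ and invoking the inductive hypothesis; condition $(\ddag)$ enters only once, at the base case $|m|=2$. You instead write down a closed-form product $\prod_j \om_j\bigl(m_j, \vpi_{a_1}^{m_1}\cdots\vpi_{a_{j-1}}^{m_{j-1}}(x)\bigr)$, which is well defined by fiat, and shift all the work into verifying the cocycle identity via the swap lemma $\om_i(p,x)\,\om_j(q,\vpi_{a_i}^p(x)) = \om_j(q,x)\,\om_i(p,\vpi_{a_j}^q(x))$. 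The two arguments are essentially dual: the paper's well-definedness claim and your swap lemma encode the same commutation data, and both are proved by the same peel-off-one-generator induction. Your version buys an explicit formula for the cocycle and makes the sum-to-one condition transparent as a collapsing nested sum, at the cost of heavier bookkeeping when rearranging the two $2k$-fold products; the paper's version is leaner in that $(\ddag)$ is invoked only at the base case and the permutation argument is absorbed into the induction. One point worth making explicit when you write up the rearrangement: the swap relation also holds trivially when $i=j$ (it is then just the one-variable cocycle identity), so every adjacent transposition needed to pass from one ordering of the $2k$ factors to the other is legitimate, and you need not track which pairs become adjacent along the way. With that observation recorded, your proof is complete.
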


\begin{proof} Note that $\sP$ is
isomorphic with $(\bbZ^+)^k$ under the isomorphism $a_j \to e_j$,
where $e_j$ is the standard basis element
$(0,\dots,0,\underset{j}{1},0,\dots,0).$ For convenience, we work
with $(\bbZ^+)^k$ in place of $\sP.$

Suppose $\sP$ is admissible with cocycle $\om$.  If we set
$\om_i(\ell, x) = \om(\ell\,e_i, x),$ then the cocycle identity
shows that the condition ($\ddag$) is necessary.

Conversely, suppose cocycles $\om_i$ are given which satisfy the
conditions of the proposition.  For $m \in (\bbZ^+)^k,\ m = (m_1,
\dots, m_k),$ let $|m| = m_1 +\cdots + m_k.$ We will define a
cocycle $\om(m, x)$ on $\sP$ by induction on $|m|.$

For $|m| = 0,$ set $\om(m, x) = 1.$ For $|m| = 1,\ m = e_j$ for some
$j$ and we define $\om(m, x) = \om_j(1, x).$ For $|m| = 2,$ either
$m = 2e_j$ for some $j$ or else $m = e_i + e_j$ for some $i \neq j.$
In the first case, set $\om(m, x) = \om_j(2, x).$ This satisfies the
cocycle identity because $\om_j$ does. In the second case, define
$\om(m, x) = \om_i(1, x)\om_j(1, \vpi_i(x)).$ Note that the cocycle
identity is satisfied by assumption ($\ddag$).

Suppose now that $\om(m, x)$ has been defined and satisfies the
cocycle identity for $|m| \leq N$ for $N > 1.$ We now define $\om(m,
x)$ for $|m| = N+1.$

Let $m,\ n, \ p, \ q \in \sP$ be such that $n + m = p + q$ and $|n|,
|m|, |p|, |q|$ are all positive, and $|n|+|m| = |p|+|q| = N+1.$ We
claim that
\[ \om(n, x)\om(m, \vpi_n(x)) = \om(p, x)\om(q, \vpi_p(x)).\]
We do this first assuming $|n-p| = |m-q| = 1.$ Thus, either there
exists $j$ such that $n = p + e_j$ or $p = n + e_j.$  The two cases
are similar; we do the first case. Then, $ q = m + e_j.$ Thus,
\begin{align*}
\om(n, x)\om(m, \vpi_n(x)) &= \om(p+e_j, x)\om(m, \vpi_n(x)) \\
    &= \om(p, x)\om(e_j, \vpi_p(x))\om(m, \vpi_n(x)) \\
    &= \om(p, x)\om(e_j, \vpi_p(x))\om(m, \vpi_{e_j+p}(x)) \\
    &= \om(p, x)\om(e_j, \vpi_p(x))\om(m, \vpi_{e_j}(\vpi_p(x))) \\
    &= \om(p, x)\om(m+e_j, \vpi_p(x)) \\
    &= \om(p, x)\om(q, \vpi_p(x))
\end{align*}
where we have used the induction hypothesis that the (partially
defined) cocycle satisfies the cocycle identity where it is already
defined. For the case where $|n-p| > 1,$ we repeat the first step
$|n-p|$ times.

Thus if $m+n = N+1,$ if we set $\om(n+m, x) = \om(n, x)\,\om(m,
\vpi_n(x)),$ the above calculation shows that $\om$ is a well
defined cocycle which satisfies the cocycle condition. Conditions
$(1)$ through $(3)$ of  Definition~\ref{d:cocycle} are easily
verified.
\end{proof}

\section{Divisible semigroups}
An abelian group $\sG$ is \emph{divisible} if the equation $ mx = a
\ (m \in \bbN,\ a \in \sG)$ has a solution $x \in \sG$ (\cite{Fu}).
One could use the same definition for semigroups. However, we want
to consider examples such as the semigroup $\sP$ of positive dyadic
rationals. Let $\sD = \{ \frac{k}{2^n}, \ k \in \bbZ, \ n \in \bbZ
\}$ be the group of dyadic rationals.  This is not divisible, as the
equation $mx = a$ is solvable for $ x \in \sD$ only for $ m $ a
power of $2$. Thus, for our purposes an alternative definition is
appropriate.

\begin{definition} \label{d:divisible}
A sequence $\{ d_k\}$ in a semigroup $\sP$ will be called a
\emph{fundamental sequence} if
\begin{enumerate}
\item  there exists a sequence of integers $n_k > 1$ such that $d_k =
n_k d_{k+1},\ k \geq 1,$ and
\item for every $d \in \sP$ there exists $k \in \bbN$ such that $d_k$
divides $d$.
\end{enumerate}
We say $\sP$ is \emph{divisible} if it contains a fundamental
sequence.
\end{definition}

\begin{proposition} \label{p:divisible} Let $\sP$ be a divisible semigroup of CSLI maps
 on $X$. Then either all $\vpi_d,\ d \in \sP,$ are homeomorphisms,
or else none is a homeomorphism.
\end{proposition}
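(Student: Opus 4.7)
The plan is to show the two-way implication by showing that if \emph{some} $\vpi_{d'}$ is a homeomorphism, then \emph{all} $\vpi_d$ are homeomorphisms; the contrapositive gives the other direction. I will exploit the fundamental sequence $\{d_k\}$ as a ``backbone'' to which every element of $\sP$ can be reduced.

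First I would record the key algebraic observation: if $\vpi$ is CSLI on the compact metric space $X$ and some power $\vpi^m$ is injective, then $\vpi$ itself is injective, because $\vpi(x) = \vpi(y)$ implies $\vpi^m(x) = \vpi^m(y)$. A CSLI map which is in addition injective is a continuous bijection of the compact Hausdorff space $X$ onto itself, hence a homeomorphism. So for any $n \in \bbN$, the map $\vpi_{d}^n = \vpi_{nd}$ is a homeomorphism if and only if $\vpi_d$ is.

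Now suppose $\vpi_{d'}$ is a homeomorphism for some $d' \in \sP$. By property (2) of Definition~\ref{d:divisible}, choose $k$ with $d_k \mid d'$, so $d' = m\,d_k$ for some $m \in \bbN$. Then $\vpi_{d'} = \vpi_{d_k}^m$, and the observation above forces $\vpi_{d_k}$ to be a homeomorphism. Next, for any $j > k$, the relation $d_k = n_k d_{k+1} = n_k n_{k+1} d_{k+2} = \cdots = (n_k n_{k+1}\cdots n_{j-1})\,d_j$ gives $\vpi_{d_k} = \vpi_{d_j}^{n_k n_{k+1}\cdots n_{j-1}}$, so the same observation forces $\vpi_{d_j}$ to be a homeomorphism for every $j \geq k$.

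Finally, let $d \in \sP$ be arbitrary. By property (2), some $d_\ell$ divides $d$; since $d_{\ell+1} \mid d_\ell$, we may take $\ell$ as large as we please, in particular $\ell \geq k$. Write $d = p\,d_\ell$; then $\vpi_d = \vpi_{d_\ell}^p$ is a composition of homeomorphisms, hence a homeomorphism. Thus $\vpi_d$ is a homeomorphism for every $d \in \sP$. The contrapositive says that if some $\vpi_d$ fails to be a homeomorphism then no $\vpi_{d'}$ is, completing the dichotomy. The only subtlety to watch in the write-up is the direction of divisibility in the fundamental sequence (since $d_{k+1}$ is the ``smaller'' element, we can always pass to a later index $d_j$ while preserving divisibility of the target), but this is a bookkeeping matter rather than a genuine obstacle.
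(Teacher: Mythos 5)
Your proposal is correct and follows essentially the same route as the paper: both arguments rest on the observation that injectivity of a power $\vpi_{d_n}^m$ forces injectivity of $\vpi_{d_n}$ (hence, by compactness, that $\vpi_{d_n}$ is a homeomorphism), and then express an arbitrary $\vpi_e$ as a power of some $\vpi_{d_n}$ from the fundamental sequence. The paper merely compresses your two-stage reduction by picking a single index $n = \max\{k,\ell\}$ with $d_n$ dividing both elements at once.
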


\begin{proof}
Suppose for some $d \in \sP, \ \vpi_d$ is a homeomorphism. Let $ e$
be another element of $\sP.$

If $d_k$ divides $d$ and $d_{\ell}$ divides $e$, taking $n = \max\{
k,\ \ell\}$ we have that $d_n$ divides both $d,\ e.$ Say $ d =
md_n$; then $\vpi_d$ is the $m-$ fold composition of $\vpi_{d_n}$
with itself.  Since the composition is injective, $\vpi_{d_n}$ is
injective, hence $\vpi_{d_n}$ is a homeomorphism. But since $ e =
m'd_n,$ for some $m' \in \bbN,\ \vpi_e$ is a composition of
homeomorphisms, hence is a homeomorphism.
\end{proof}

\begin{proposition} \label{p:divis2} Suppose $\sP$ is a divisible
semigroup of CSLI maps which are not homeomorphisms.  Then there is
an $x_0 \in X$ satisfying
\[ |\vpi_d^{-1}(x_0)| > 1 \]
for all $d \in \sP .$ Furthermore there exist $u_0 \neq v_0$ such
that for all $d \in \sP, \ \vpi_d(u_0) = \vpi_d(v_0).$
\end{proposition}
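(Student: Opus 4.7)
The plan is to use a fundamental sequence $\{d_k\}$ of $\sP$ and argue twice by compactness, once in $X$ for the first assertion and once in $X \times X$ for the second. In both cases I will exhibit a descending chain of nonempty closed sets indexed by $k$, extract a point (respectively pair) from the intersection, and then use the divisibility condition to extend the conclusion from the subsemigroup $\bbN\, d_k$ to all of $\sP$. Crucially, each $\vpi_{d_k}$ fails to be injective, since it is a CSLI surjection of compact $X$ that, by Proposition~\ref{p:divisible}, is not a homeomorphism (a continuous bijection from a compact space is a homeomorphism).

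For the first assertion, set $B_k = \{y \in X : |\vpi_{d_k}^{-1}(y)| \geq 2\}$. Corollary~\ref{c:cardopen} makes $B_k$ closed, and the observation above makes it nonempty. To see $B_{k+1} \subset B_k$, use $\vpi_{d_k} = \vpi_{d_{k+1}}^{n_k}$ with $n_k \geq 2$: given distinct $a, b \in \vpi_{d_{k+1}}^{-1}(y)$, surjectivity of $\vpi_{d_{k+1}}$ gives nonempty sets of $(n_k-1)$-fold preimages of $a$ and of $b$; they are disjoint since $a \neq b$, and each point in either set is mapped to $y$ by $\vpi_{d_k}$. Compactness of $X$ then yields $x_0 \in \bigcap_k B_k$. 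For an arbitrary $d = m\, d_k \in \sP$, the same surjectivity trick applied to two distinct elements of $\vpi_{d_k}^{-1}(x_0)$ produces two distinct elements of $\vpi_d^{-1}(x_0) = \vpi_{d_k}^{-m}(x_0)$.

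For the second assertion, set $\Delta_k = \{(u, v) \in X \times X : \vpi_{d_k}(u) = \vpi_{d_k}(v)\}$ and let $\Delta$ denote the diagonal. Applying $\vpi_{d_{k+1}}^{n_k - 1}$ to both sides of the defining equation gives $\Delta_{k+1} \subset \Delta_k$, and $\Delta_k \setminus \Delta \neq \emptyset$ because $\vpi_{d_k}$ is not injective. The step I expect to be the main obstacle is showing that $\Delta_k \setminus \Delta$ is already closed in $X \times X$; without this, the compactness argument could collapse the chosen pairs onto the diagonal and produce nothing. The point is that local injectivity forbids this collapse: if $(u_n, v_n) \in \Delta_k \setminus \Delta$ converged to $(u, u)$, then for large $n$ both $u_n$ and $v_n$ would lie in a neighborhood of $u$ on which $\vpi_{d_k}$ is injective, forcing $u_n = v_n$ and contradicting $(u_n, v_n) \notin \Delta$. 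With closure in place, $\bigcap_k (\Delta_k \setminus \Delta)$ is nonempty by compactness of $X \times X$; any $(u_0, v_0)$ in the intersection is an off-diagonal pair with $\vpi_{d_k}(u_0) = \vpi_{d_k}(v_0)$ for all $k$, and the identity $\vpi_d = \vpi_{d_k}^m$ (when $d = m\, d_k$) propagates the equality to every $d \in \sP$.
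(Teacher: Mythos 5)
Your proof is correct. The paper proves both assertions from the same basic ingredients --- compactness, Corollary~\ref{c:cardopen}, surjectivity to pull extra preimages back through compositions, and local injectivity to rule out degenerate limits --- but packages them as a sequential-compactness argument: it picks $x_k$ with $|\vpi_{d_k}^{-1}(x_k)|>1$, extracts a convergent subsequence $x_k \to x_0$, and derives a contradiction from Corollary~\ref{c:cardopen} if $|\vpi_d^{-1}(x_0)|=1$ for some $d$; for the second assertion it chooses $u_k \neq v_k$ in $\vpi_{d_k}^{-1}(x_0)$, passes to limits $u_0, v_0$, and identifies $\vpi_d(u_0)=\vpi_d(v_0)$ via a further subsequence argument on $\vpi_{e_k}(x_0)$ where $d = d_k + e_k$. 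You instead invoke the finite intersection property on the nested closed sets $B_k$ and $\Delta_k \setminus \Delta$; the nestedness $B_{k+1}\subset B_k$ is the one thing you prove that the paper never needs. This reorganization buys two things: $x_0$ lands in every $B_k$ outright, with no proof by contradiction, and the second assertion decouples completely from the first --- your pairs need not sit over $x_0$, and the propagation from $d_k$ to $d = m d_k$ is the one-line identity $\vpi_d = \vpi_{d_k}^{m}$ rather than the paper's interchange of limits. Your observation that $\Delta_k \setminus \Delta$ is closed, which you correctly flag as the crux, is precisely the paper's final step (``if $u_0=v_0$ then $\vpi_d$ is not injective in any neighborhood of $u_0$'') moved to where the compactness argument actually needs it.
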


\begin{proof}
By Proposition~\ref{p:divisible}, either all the maps $\vpi_d\ (d\in
\sP)$ are homeomorphisms or none is. Let $\{d_k\}$ be a fundamental
sequence, and for each $k$ let $x_k$ satisfy $|\vpi_{d_k}^{-1}(x_k)|
> 1.$ Taking a subsequence, we may assume $x_k \to x_0.$ Now if for
some $d \in \sP $ we had that $|\vpi_d^{-1}(x_0)| = 1,$ then by
Corollary~\ref{c:cardopen} there is a neighborhood $\sU$ of $x_0$
such that $ |\vpi_d^{-1}(x)| = 1 $ for all $x \in \sU$. Let $d_k$ be
such that $d_k$ divides $d$, say $d = m d_k,$ with $k$ sufficiently
large so that $x_k \in \sU.$ But then
\[ |\vpi_d^{-1}(x_k)| \geq |\vpi_{d_k}^{-1}(x_k)| > 1.\]
Thus, $|\vpi_d^{-1}(x_0)| > 1.$
 This proves the first assertion.

Next, let $\{ u_k\},\ \{v_k\}$ be sequences such that
\[ u_k \neq v_k \text{ and } \vpi_{d_k}(u_k) = \vpi_{d_k}(v_k) =
x_0.\] By taking subsequences, we may assume that $u_k \to u_0,\ v_k
\to v_0.$

Fix $d \in \sP$ and write $d = d_k + e_k$, for $k \geq
 N $ for some $N \in \bbN.$ Then
\begin{align*}
\vpi_d(u_k) &= \vpi_{e_k}\circ \vpi_{d_k}(u_k) \\
    &= \vpi_{e_k}(x_0).
\end{align*}
Now by taking a subsequence we may assume $\vpi_{e_k}(x_0) \to y_0,$
say. Since $u_k \to u_0,\ \vpi_d(u_k) \to \vpi_d(u_0).$ But by the
above, $\vpi_d(u_k) \to y_0.$ Thus $\vpi_d(u_0) = y_0.$  Similarly,
$\vpi_d(v_0) = y_0.$

Now if $u_0 = v_0,$ then $\vpi_d$ is not injective in any
neighborhood of $u_0.$  Thus, $u_0 \neq v_0.$
\end{proof}

Thus we can state our
\begin{theorem}  Let $\sP$ be a divisible semigroup of
CSLI maps acting on a compact metric space $X$. Suppose $\sP $
separates the points of $X$.  Then $\sP$ consists of homeomorphisms.
\end{theorem}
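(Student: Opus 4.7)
The plan is to argue by contradiction, combining the two propositions just proved. Suppose $\sP$ does not consist entirely of homeomorphisms. By Proposition~\ref{p:divisible}, since $\sP$ is divisible, this forces \emph{no} element $\vpi_d$ of $\sP$ to be a homeomorphism: divisibility prevents the homeomorphism property from holding for some but not all generators.

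With $\sP$ now consisting entirely of non-homeomorphisms, I would apply Proposition~\ref{p:divis2} directly. That proposition produces two distinct points $u_0 \neq v_0$ in $X$ such that $\vpi_d(u_0) = \vpi_d(v_0)$ for every $d \in \sP$. But this is precisely the failure of $\sP$ to separate the pair $\{u_0, v_0\}$, contradicting the hypothesis that $\sP$ separates points of $X$.

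Hence the assumption that some (equivalently, every) $\vpi_d$ fails to be a homeomorphism is untenable, and every $\vpi_d,\ d \in \sP,$ is a homeomorphism. There is no real obstacle here; the substantive work has been absorbed into the two preceding propositions, and the role of the theorem is simply to package their combined consequence under the extra hypothesis of point-separation.
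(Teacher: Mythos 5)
Your proof is correct and follows the paper's argument exactly: Proposition~\ref{p:divisible} reduces to the dichotomy all-or-none, and Proposition~\ref{p:divis2} then yields the unseparated pair $u_0 \neq v_0$, contradicting the hypothesis. No differences worth noting.
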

\begin{proof} By Proposition~\ref{p:divisible}, either $\sP$ consists
of homeomorphisms, or else of CSLI maps which are not
homeomorphisms. Suppose the latter is the case. Then with $u_0,\
v_0$ as in Proposition~\ref{p:divis2}, let $d \in \sP.$ By the
Proposition, $u_0,\ v_0$ map to the same point under $\vpi_d,$ for
all $d \in \sP.$ But that contradicts the assumption that $\sP$
separates the points of $X$. Thus the CSLI maps must in fact be
homeomorphisms.
\end{proof}

It is not \textit{a priori} obvious that divisible semigroups of
CSLI maps which are not homeomorphisms exist. Before constructing
the example, we remind the reader of a construction which has been
used to ``cut up'' the real numbers to obtain a zero-dimensional
space, $Z$. For each dyadic rational $ d \in \bbR, $ we replace $d$
by two points $d^-$ and $d^+$ so that $d^- < d^+$ and no point lies
between $d^-, \ d^+.$ Thus $Z$ is an ordered set. Now we introduce a
topology by taking as a base $\sB$ for the topology the sets $ [r^+,
s^-]$ where $r < s$ are dyadic rationals. In this topology, every
``open interval'' $(a, b) = \{ x \in Z,\ a < x < b \}$ is an open
set in the topology of $Z$.

Observe that the complement of an interval $[r^+, s^-]$ is also
open, so that $[r^+, s^-]$ is closed, hence clopen. One can show
that the closed intervals $ [a, b],\ a < b \in Z, $ are compact.
Thus $Z$ is a locally compact Hausdorff space, which is metrizable,
as the base $\sB$ is countable.

\begin{example} \label{e:divisadmiss} This is an example of a divisible semigroup.
We construct a compact metric space $X = X_1 \cup X_2 \cup X_3,$ the
union of three disjoint sets.  Take
\[ X_1 = [0^+, +\infty] \] the one-point compactification of the
interval $z \in Z:\ z \geq 0^+$. Now we take $X_2,\ X_3$ both to be
the one-point compactifications of copies of $(-\infty, 0^-] $ in
$Z.$ To distinguish them, we use superscripts hat and tilde. Thus,
\[ X_2 = [-\hat{\infty}, \hat{0}^-] \text{ and } X_3 =
[-\tilde{\infty}, \tilde{0}^-] .\]

The set $Z$ is not a group under addition, but there is an action of
the group $\sD$ of dyadic rationals on $Z,$ as follows: let $d \in
\sD$ and define translation $\vpi_d$ on $Z$ by
\begin{equation*}
\vpi_d(x) =
\begin{cases}
d+x \text{ if } x \text{ is not a dyadic rational}; \\
(d + x)^+ \text{ if } x = r^+ \text{ where } r \text{ is a dyadic rational}\\
(d + x)^- \text{ if } x = r^- \text{ where } r \text{ is a dyadic
rational}.
\end{cases}
\end{equation*}
It is easy to see that $\vpi_d$ is continuous, as $\vpi_d^{-1}$ maps
basic open intervals to basic open intervals. Similarly, $\vpi_d$ is
seen to be an open map. Since it is both injective and surjective,
it is a homeomorphism of $Z$.

Let $\sP$ denote the positive dyadic rationals, and, changing
notation, let $\vpi_d \ (d \in \sP)$ denote an action of $X$, which
we define as follows: $\vpi_d$ leaves all three points at infinity
fixed. For $x \in X_1 \cup X_2$ not a point at infinity, we let
$\vpi_d(x) $ be defined as follows:  if $x \in X_2, \ x = \hat{z}$
for some $z \in Z,$  and $ d \in \sP,$
\begin{equation*}
\vpi_d(x) =
\begin{cases}
\widehat{d + z} \in X_2 \text{ if } d + z \leq 0^- \\
d + z \in X_1 \text{ if } d + z \geq 0^+.
\end{cases}
\end{equation*}
If $x \in X_1,$ then $\vpi_d(x)$ is defined exactly as on $Z$.
$\vpi_d$ acts similarly on $X_1 \cup X_3.$ Clearly, $\vpi_d$ is
surjective on $X$. And in the same way as with $Z$, one sees that
$\vpi_d$ is continuous and open. Note that if $ x \in [-d^+, 0^-]
\subset Z$ that $\vpi_d(\hat{x}) = \vpi_d(\tilde{x})$, so that
$\vpi_d$ is not one-to-one.  Thus, $\{ \vpi_d:\ d \in \sP \}$ is a
semigroup of local homeomorphisms on the compact space $X$ which are
not homeomorphisms.  Note that the semigroup $\sP$ has a fundamental
sequence, namely $\{ \frac{1}{2^n} \}_{n \in\bbN}$, so that $\sP$ is
a divisible semigroup.

Next we show that the semigroup is admissible. To simplify notation,
when $x \in X$ belongs to either $X_2$ or $X_3$, and there is no
need to distinguish between $X_2,\ X_3,$ we will omit the
superscripts hat and tilde. Define the cocycle $\om$ on $\sP \times
X$ by
\begin{equation}
\om(d, x) =
\begin{cases}
1, \text{ if } x \leq -d^- \text{ or } x \geq 0^+ \\
\frac{1}{2} \text{ if } -d^+ \leq x \leq 0^- .
\end{cases}
\end{equation}
We do this for all $d \in \sP$ and $x \in X$.  We need to show this
is consistent with the cocycle identity.  So suppose $ e, \ f \in
\sP $ with $ e + f = d.$ Suppose $ x \leq -d^-.$  Then $\om(e, x) =
1.$ Now we claim that $\vpi_e(x) \leq -f^-. $ For otherwise, we
would have $\vpi_e(x) \geq -f^+, $ hence $ \vpi_f(\vpi_e(x)) \geq
\vpi_f(-f^+) \geq 0^+.$  But then $\vpi_d(x) = \vpi_f(\vpi_e(x))
\geq 0^+$ which contradicts that $ x \leq -d^-.$ Thus, both $\om(e,
x)$ and $ \om(f, \vpi_e(x)) $ equal $1$, as does $ \om(d, x).$

The case were $x \geq 0^+$ is easier, for then it is clear that
$\om(e, x)$ and $ \om(f, \vpi_e(x)),$ and $\om(d, x)$ are all equal
to $1$.

Finally, let $-d^+ \leq x \leq 0^-.$  Consider two cases: if
$\vpi_e(x) \geq 0^+, $ then $|\vpi_e^{-1}(\vpi_e(x))| =2,$ so that $
\om(e, x) = \frac{1}{2}.$  As $\vpi_e(x) \geq 0^+, \ \om(f,
\vpi_e(x)) = 1.$ By definition $\om(d, x) = \frac{1}{2}, $ so that
the equality
\[ \om(x, d) = \om(e, x) \om(f, \vpi_e(x)) \]
holds. In the other case, $ \vpi_e(x) \leq 0^-,$ that is,
\[ -d^+ \leq x \leq \vpi_e(x) \leq 0^- .\]
Then $\om(e, x) = 1.$  But $\vpi_f(\vpi_e(x)) = \vpi_d(x) \geq 0^+$
so that both $\om(f, \vpi_e(x)),$ and $ \om(d, x) = \frac{1}{2}.$ So
again the cocyle identity
\[ \om(d, x) = \om(e, x) \om(f, \vpi_e(x)) \]
holds.

Finally we need to observe that for arbitrary $ d \in \sP,$ the map
\[ x \to \om(d, x) \]
is continuous.  But observe that
 \[ \{ x:\ \om(d, x) = \frac{1}{2}\} = [-\hat{d}^+, \hat{0}^-] \cup [-\tilde{d}^+,
 \tilde{0}^-] \]
which is a clopen set. Thus, the set where the cocycle is $1$ is
also clopen, and so the cocycle is continuous.

\end{example}

\begin{remark} Proposition~\ref{p:divis2} shows that some
features of Example~\ref{e:divisadmiss} are not arbitrary. The role
of the point $x_0$ in the Proposition is played by $0^+$, and the
roles of $u_0,\ v_0 $ are played by $\hat{0}^-,\ \tilde{0}^-$ in the
example.
\end{remark}

\begin{remark} We do not know of an example of a divisible
semigroup of CSLI maps where the maps are not local homeomorphisms.
\end{remark}


\begin{thebibliography}{99}

\bibitem{Ba} V. Baladi, \textit{Positive transfer operators and decay
of correlations} in Advanced Series in Nonlinear Dynamics, 16, World
Scientific Publishing Co., Inc., River Edge, NJ, 2000.

\bibitem{ER} R. Exel and J. Renault, \textit{Semigroups of local
homeomorphisms and interaction groups}, Ergodic Theory Dynam.
Systems \textbf{27}, 2007, no. 6, 1737--1771.

\bibitem{Fu} L. Fuchs, \emph{Abelian Groups}, Publishing House of the Hungarian Academy of Sciences, Budapest, 1958.


\end{thebibliography}
\end{document}